  \newcommand{\lip}{\mathtt{1Lip}}
  \newcommand{\dH}{d_{\mathrm{H}}}
  \newcommand{\dgh}{d_{\mathrm{GH}}}
    \newtheorem{observation}{Observation}
  \DeclareMathOperator{\dis}{dis}
\begin{document}

\title{Approximate Capture \\ in Gromov--Hausdorff Closed Spaces \thanks{This study was supported by the Russian Science Foundation (project no.~17-11-01093).}
}

\titlerunning{Approximate Capture in Gromov--Hausdorff Closed Spaces}        

\author{Olga Yufereva}


\institute{O. Yufereva \at
              Krasovskii Institute of Mathematics and Mechanics \\
              \email{olga.o.yufereva@gmail.com}     }

\date{Received: date / Accepted: date}

\maketitle

\begin{abstract}
We consider the  Lion and Man game, i.e.,  a two-person pursuit-evasion game with equal players' top speeds. We assume that  capture radius is positive and chosen in~advance. The main aim of the paper is describing pursuer's winning strategies in~general compact metric spaces that are close  to the given one in the sense of Gromov--Hausdorff distance.  
We prove that the existence of  $\alpha$-capture by a time $T$ in~one compact geodesic space implies the existence of  $\bigl(\alpha + (20T +8)\sqrt{\varepsilon}\bigr)$-capture by this time $T$ in~any compact geodesic space that is $\varepsilon$-close to the given space.
 It means that  capture radii (in~a~nearby spaces) tends to the given one as the distance between spaces tends to zero. Thus, this result justifies calculations on graphs instead of complicated spaces.
\keywords{pursuit-evasion game \and Lion and Man problem \and guidance \and robustness \and   Gromov--Hausdorff distance \and capture radius \and geodesic space \and finite graph}
\subclass{53C23 \and 49N75 \and 91A24}
\end{abstract}
\section*{Introduction}

The Lion and Man game is linked with different areas of mathematics and is applicable, e.g., to studying Browian motions~\cite{bramson2013shy} and behaviour of pursuit curves on CAT(k) spaces~\cite{alexander,jun} and to getting a new geometric properties  \cite{kohlenbach2018quantitative}.
In brief,the Lion and Man game is a two-person game that assumes players to have the same capabilities (typically, the same top speed). We also consider  positive capture radii and 
 assume that players move along 1-Lipschitz curves only. 
These assumptions allow us to expand  consideration to general metric spaces, beyond Euclidean cases.

This paper focuses on the possibility of transporting this game from one metric space to another, `nearby' metric space.
 More precisely, we build the strategies transfer that provides the robustness of capture radius. 
 This approach significantly differs from the classic conception of Krasovskii and Subbotin as we may not use smooth structure. 
 In particular, the classical  works \cite{kras1,kras2}, as well as the stochastic generalizations \cite{averboukh,kras3}, use derivations of Lyapunov functions and guidance methods, whereas 
we introduce  similar constructions dealing with general compact metric spaces. 
More exactly, we study the robustness of capture radius with respect to Gromov--Hausdorff distance, i.e., the distance between compact metric spaces.

Our main result states capture robustness in the following form: \\ {\it Let two compact geodesic spaces be such that the Gromov--Hausdorff (or Hausdorff) 
distance between them  is not greater than $\varepsilon.$ Existence of  $\alpha$-capture by a time $T$ in one space implies  existence of  $\bigl(\alpha + (20T +8)\sqrt{\varepsilon}\bigr)$-capture by the time $T$ in the other space.} \\
This result justifies replacing a space by a sequence of finite graphs: namely, we can approximate the space by such a~sequence, consider the capture radii in these graphs and make a conclusion about the capture radius in the first space. 
 We return to the applications in Section \ref{sec-5-res}.

Note that this theorem uses an upper bound on time, whereas usually, the infinite horizon is considered (see e.g. \cite{alexander,kohlenbach2018quantitative,yufereva}). 
Note that the existence of the upper bound is proved in \cite{bramson2013shy} for  bounded CAT(0) domains with several extra restrictions.
In addition, there are estimates for upper and lower bounds on capture time for the disk in the paper~\cite{alonso}. 
However, as far as we know, there are no similar estimates in general metric spaces.

The paper is organized as follows. Section \ref{sec-2-st} introduces game and geometry definitions, Section~\ref{sec-5-res} contains the theorem and a little discussion. Finally,  Section~\ref{sec-proof} provides the proofs.

\section{Preliminaries}
\label{sec-2-st}

\subsection{Lion and Man game}

We consider the Lion and Man problem, where Lion is a pursuer and Man is an evader. We assume that  both players  move in a metric space $(X, \rho)$ and denote by $L(\cdot)$ and $M(\cdot)$ the~trajectories of Lion and Man respectively. Notice that $L(\cdot)$ and $M(\cdot)$ are 1-Lipschitz functions of time to the space $X.$ 
We consider the game from Lion's standpoint. Hence, Lion uses  non-anticipative (or even stepwise) strategies against arbitrary possible movements of Man. By non-anticipative and stepwise strategies we mean the following.  

Let us denote by $\lip(X)$  the set of all 1-Lipschitz curves from $\mathbb{R_{+}}$ to a set $X$ with respect to the metric $\rho$ on this set $X.$ It is the set of each player's admissible trajectories. 
A map $\mathfrak{s}_{L_0} \colon \lip(X) \to \lip(X)$ is called Lion's {\it non-anticipative} strategy (with Lion's initial position $L_0$) if
it satisfies the equality $\mathfrak{s}_{L_0}(M)(0) = L_0$ for all $M\in \lip(X)$ and 
the following implication holds true: for admissible trajectories  $M_1, M_2 \in \lip(X)$ and a number $\tau\geq 0,$ if 
$$M_1(t) = M_2(t) \quad \forall t \in [0, \tau],$$
then
$$\mathfrak{s}_{L_0}(M_1)(t) = \mathfrak{s}_{L_0}(M_2)(t) \quad \forall t \in [0, \tau]. $$
A particular case of non-anticipative strategy is a stepwise strategy. For a positive number $\beta,$ we say that a map~$\mathfrak{s}_{L_0}$ is called Lion's {\it $\beta$-stepwise} strategy (with Lion's initial position $L_0$) if 
it satisfies the equality $\mathfrak{s}_{L_0}(M)(0) = L_0$ for all $M\in \lip(X)$ and 
the following implication holds true: for  admissible trajectories $M_1, M_2 \in \lip(X)$ and a number $n\in \mathbb{N},$ if 
\begin{equation*}
M_1(t) = M_2(t) \quad \forall t \in [0, n\beta],
\end{equation*}
then
$$\mathfrak{s}_{L_0}(M_1)(t) = \mathfrak{s}_{L_0}(M_2)(t) \quad \forall t \in [0, (n+1)\beta]. $$
\begin{definition}
 If $(X, \rho)$ is a compact metric space and $\alpha$ is  a positive number, then by the phrase `there are $(X, \rho, \alpha, T)$-winning strategies' let us denote the fact that, for any Lion's initial position $L_0,$ he has a non-anticipative strategy $\mathfrak{s}_{L_0} \colon \lip(X) \to \lip(X) $ such that
$$\mathfrak{s}_{L_0}(M)(0)=L_0,$$
 $$\inf\limits_{t\in [0,T]} \rho\bigl(M(t), \mathfrak{s}_{L_0}(M)(t)\bigr)\leq\alpha$$  for all Man's movements $M(\cdot)\in \lip(X).$
\end{definition}

\subsection{Basic notation}

Let us provide definitions and notations under the assumption that $(X,\rho_X)$ and $(Y,\rho_Y)$ are  metric spaces, map $f$ is from $X$ to $Y,$ and $a$ is a positive number.
\begin{itemize}
\item $dom(f)$ denotes the domain of the map $f;$
\item $\mathbb{R}_{+} = [0, +\infty);$
\item $ \lceil a\rceil$ is the ceiling function of the number $a;$ 
\item if $S\subset X,$ then $f[S]$  denotes the set $\{f(x)\mid x\in S\};$
\item a set $S\subset X$ is called an {\it $\varepsilon$-net} of the metric space $(X, \rho)$ if, for any point $x\in X,$ there is a point $s\in S$ such that $\rho(x, s)\leq \varepsilon;$
\item a map $g$ from a closed interval $[0, l] \subset \mathbb{R}$ to $X$ is called a {\it geodesic path} iff  $g(0) = x, \ g(l) = y$ and $\rho_{X}(g(t), g(t')) = |t-t'|$ for all $t, t' \in [0, l]$ (see \cite[I.1.3]{bh});
 \item $(X, \rho_X)$ is said to be a {\it geodesic space} if every two points $x$ and $y$ in $X$ are joined by a geodesic path;
 \item 
and if, for every pair of points $x_1, x_2 \in X,$ the distance $\rho_X(x_1, x_2)$ is equal to the infimum of the length of rectifiable curves joining these points, then $(X, \rho_X)$ is called a{\it~length~space}, otherwise known as an {\it inner metric space} (see \cite[I.3.3]{bh}).
\end{itemize}

\subsection{Distance between metric spaces}

Definition of the Gromov--Hausdorff distance is fundamental in this paper, but one can imagine the Hausdorff distance instead of the Gromov--Hausdorff one in the case when  all considered spaces are subspaces of one ambient space. 
The distance was proposed by Gromov and Edwards independently (see \cite{tuzhilin2016invented}). Let us give preference to Gromov's approach and use the following definition of the Gromov--Hausdorff distance borrowed from \cite[Def. 7.3.10.]{burago}:
\begin{definition}
Let $X$ and $Y$ be metric spaces. The Gromov--Hausdorff distance between them, denoted by $\dgh(X, Y),$ is defined by the following 
relation. For an $r > 0,$ we have $\dgh(X, Y)<r$  if and only if there exist a metric space $Z$ and subspaces $X'$ and $Y'$ of it that are isometric to $X$ and $Y,$ respectively, and such that $\dH(X', Y') < r.$ In other words, $\dgh(X,Y)$ is the infimum of positive $r$ for which the above $Z, X',$ and $Y'$ exist. Here $\dH$ denotes the Hausdorff distance between subsets of $Z.$ 
\end{definition}
There exist other equivalent definitions, they are helpful to prove that this distance is a {\it metric} on the (continual) set of compact metric spaces. But, these definitions do not reflect the main idea so good. One can find both historic remarks and the list of important properties of the {\it Gromov--Hausdorff space} in the paper \cite{tuzhilin2016invented}. 
Let us introduce some related definitions and properties. 

\begin{definition}
Let $(X, \rho_X)$ and $(Y, \rho_Y)$ be metric spaces and let $f \colon X \to Y$ be an 
arbitrary map. The {\it distortion of $f$} (denoted by $\dis f$) is defined as  
$$ \dis f = \sup\limits_{x_1,x_2 \in X} |\rho_{X}\bigl(x_1, x_2\bigr)- \rho_{Y}\bigl(f(x_1), f(x_2)\bigr)|.$$
\end{definition}
\begin{definition}
Let $X$ and $\tilde{X}$ be metric spaces and let $\varepsilon > 0.$ A map $h \colon X \to \tilde{X}$ is called an{\it~$\varepsilon$-isometry} from $X$ to $\tilde{X}$ if $\dis h \leq \varepsilon$ and $h[X]$ is an $\varepsilon$-net in~$\tilde{X}.$ 
\end{definition}
\begin{lemma}[\cite{burago} Cor.7.3.28] \label{cor-bur}
Let $X$ and $\tilde{X}$ be two metric spaces and let $\varepsilon > 0.$ Then, \\
1. if $\dgh (X, \tilde{X}) \leq \varepsilon,$ then there exists a $2\varepsilon$-isometry from $X$ to $\tilde{X};$ \\
2. if there exists an $\varepsilon$-isometry from $X$ to $\tilde{X},$ then $\dgh (X, \tilde{X}) \leq 2\varepsilon.$ 
\end{lemma}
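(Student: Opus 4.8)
The plan is to prove the two implications separately, in each case translating between the ambient-space definition of $\dgh$ and the combinatorial notion of an $\varepsilon$-isometry, and keeping strict versus non-strict inequalities in mind since $\dgh$ is defined as an infimum.

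For part~1 I would unfold the definition: $\dgh(X,\tilde X)\le\varepsilon$ means that for every $\delta>0$ there is a metric space $Z$ containing isometric copies $X',\tilde X'$ of $X,\tilde X$ with $\dH(X',\tilde X')<\varepsilon+\delta$. Identifying $X$ with $X'$ and $\tilde X$ with $\tilde X'$ inside $Z$, I would pick for each $x\in X$ some $f(x)\in\tilde X$ with $\rho_Z(x,f(x))<\varepsilon+\delta$ (possible because $X'$ lies in the $(\varepsilon+\delta)$-neighbourhood of $\tilde X'$). The distortion bound then follows from the quadrilateral inequality,
$$|\rho_X(x_1,x_2)-\rho_{\tilde X}(f(x_1),f(x_2))|\le \rho_Z(x_1,f(x_1))+\rho_Z(x_2,f(x_2))<2(\varepsilon+\delta),$$
and the net property from the triangle inequality: any $\tilde y\in\tilde X$ has some $x\in X$ with $\rho_Z(\tilde y,x)<\varepsilon+\delta$, whence $\rho_{\tilde X}(\tilde y,f(x))<2(\varepsilon+\delta)$. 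Thus $f$ is a $2(\varepsilon+\delta)$-isometry; letting $\delta$ shrink (equivalently, reading $\dgh\le\varepsilon$ as $\dgh<\varepsilon'$ for every $\varepsilon'>\varepsilon$) delivers the desired $2\varepsilon$-isometry.

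For part~2 I would go the other way and build an ambient space by hand. Given an $\varepsilon$-isometry $f\colon X\to\tilde X$, the relation $R=\{(x,\tilde y):\rho_{\tilde X}(f(x),\tilde y)\le\varepsilon\}$ is a correspondence (it is total on $X$ because $(x,f(x))\in R$, and surjective onto $\tilde X$ because $f[X]$ is an $\varepsilon$-net), and its distortion satisfies $\dis R\le 3\varepsilon$: for $(x_1,\tilde y_1),(x_2,\tilde y_2)\in R$ one chains $\rho_{\tilde X}(\tilde y_1,\tilde y_2)\le\rho_{\tilde X}(f(x_1),f(x_2))+2\varepsilon\le\rho_X(x_1,x_2)+\dis f+2\varepsilon\le\rho_X(x_1,x_2)+3\varepsilon$, and symmetrically. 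Putting $r=\tfrac32\varepsilon$ I would define a metric on the disjoint union $Z=X\sqcup\tilde X$ by keeping $\rho_X,\rho_{\tilde X}$ on the two pieces and setting, for mixed pairs,
$$d(x,\tilde y)=\inf_{(x',\tilde y')\in R}\bigl(\rho_X(x,x')+r+\rho_{\tilde X}(\tilde y',\tilde y)\bigr).$$
The routine checks — that $d$ restricts to the original metrics, which uses $r\ge\tfrac12\dis R$, and that the mixed triangle inequalities hold — make $Z$ an admissible ambient space in which $X$ and $\tilde X$ each lie within $r$ of the other, so $\dH(X,\tilde X)\le r=\tfrac32\varepsilon$ and therefore $\dgh(X,\tilde X)\le\tfrac32\varepsilon\le2\varepsilon$.

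The main obstacle I expect is the bookkeeping in part~2: verifying that the glued function $d$ is genuinely a metric — in particular that the mixed triangle inequalities survive and that $d$ does not shrink the intrinsic distances $\rho_X,\rho_{\tilde X}$ — since this is exactly where the value $r=\tfrac32\varepsilon$ and the bound $\dis R\le3\varepsilon$ are used. The only other delicate point, shared by both parts, is respecting that $\dgh$ is an infimum, so that strict and non-strict inequalities must be tracked carefully when passing between $\dgh\le\varepsilon$ and the existence of an exact $2\varepsilon$-isometry.
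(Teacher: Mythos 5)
The paper itself contains no proof of Lemma~\ref{cor-bur}; it is imported verbatim (up to the strictness of the inequalities) from \cite{burago}, Cor.~7.3.28. So your proposal has to be judged on its own merits and against that standard argument. Your part~2 is correct and is essentially the standard proof: the correspondence $R=\{(x,\tilde{y})\colon \rho_{\tilde{X}}(f(x),\tilde{y})\le\varepsilon\}$, the bound $\dis R\le 3\varepsilon$, and the gluing of $X\sqcup\tilde{X}$ at gap $r=\tfrac12\dis R$ (whose mixed triangle inequalities indeed require exactly $2r\ge\dis R$) give $\dH(X,\tilde{X})\le\tfrac32\varepsilon$ in the glued space, hence $\dgh(X,\tilde{X})\le\tfrac32\varepsilon<2\varepsilon$.

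Part~1, however, has a genuine gap in its final sentence. For each $\delta>0$ your construction produces a map $f_\delta$ that is a $2(\varepsilon+\delta)$-isometry, but these are \emph{different maps for different} $\delta$, and ``letting $\delta$ shrink'' produces no map at all: the $f_\delta$ are not continuous, there is no equicontinuity, and the statement as given provides no compactness with which to extract a limit. This is not a cosmetic issue, because with the non-strict hypothesis $\dgh(X,\tilde{X})\le\varepsilon$ the claim is actually false for general metric spaces. Take $X=\mathbb{N}$ with $\rho_X(m,n)=1+\frac{1}{m+n}$ and $\tilde{X}=\mathbb{N}$ with $\rho_{\tilde{X}}(m,n)=1-\frac{1}{m+n}$ for $m\neq n$ (both are metrics). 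Every non-injective map $X\to\tilde{X}$ has distortion greater than $1$, and every injective map $f$ satisfies $\dis f\ge|\rho_X(1,2)-\rho_{\tilde{X}}(f(1),f(2))|=\tfrac13+\rho$ with $\rho>0$, so no map has $\dis f\le\tfrac13$ and hence no $\tfrac13$-isometry exists. On the other hand, the bijection swapping the blocks $\{1,\dots,N\}$ and $\{N+1,\dots,2N\}$ (and fixing the rest) has distortion tending to $\tfrac13$ as $N\to\infty$; gluing along it as in your part~2 shows $\dgh(X,\tilde{X})\le\tfrac16$. With $\varepsilon=\tfrac16$ this contradicts part~1 as stated. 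What your ambient-space argument does prove, cleanly and with no $\delta$ at all, is the strict statement that \cite{burago} actually asserts: $\dgh(X,\tilde{X})<\varepsilon$ implies the existence of a $2\varepsilon$-isometry. To obtain the non-strict form quoted here one must add compactness, which is all the paper ever uses (Lemma~\ref{lem-ch-exist} applies it to compact spaces): for compact $X,\tilde{X}$ there exists an optimal correspondence $R$ with $\dis R=2\dgh(X,\tilde{X})$ (a Blaschke-type compactness argument on closed subsets of $X\times\tilde{X}$), and any selection $x\mapsto f(x)$ with $(x,f(x))\in R$ is then a $2\varepsilon$-isometry. So: keep part~2 as is; in part~1 either weaken the hypothesis to the strict inequality or insert the compactness/optimal-correspondence step instead of the limiting claim.
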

\section{Results}
\label{sec-5-res}

Let us recall that, by virtue of Hopf--Rinow theorem, a compact geodesic space is a~compact length space and vice versa. Hence, the distance between every two points of a compact geodesic space is  given by the infimum of the lengths of rectifiable paths joining these points. We deal with these spaces because of convenience and since any compact metric space can be reparametrized into a length space if players' trajectories form a length structure (for details, see \cite[Chapter 2]{burago}).

\begin{theorem} \label{th-x-eps} 
Let $(\tilde{X}, \tilde{\rho})$ and $(X, \rho)$ be compact geodesic spaces, let $T>0, $ and let $\alpha\in (0,1).$ If $d_{GH}(\tilde{X}, X)\leq\varepsilon\in (0, \alpha^2),$ then the existence of  $(\tilde{X}, \tilde{\rho}, \alpha, T)$-winning strategies  implies the existence of  $(X, \rho, \alpha + (20T +8)\sqrt{\varepsilon}, T)$-winning strategies.
\end{theorem}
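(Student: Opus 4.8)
The plan is to transfer Lion's winning strategy from $\tilde X$ to $X$ by running an auxiliary (\emph{virtual}) game in $\tilde X$ alongside the real game in $X$. First I would invoke Lemma~\ref{cor-bur}: since $\dgh(\tilde X, X)\le\varepsilon$, there is a $2\varepsilon$-isometry $h\colon X\to\tilde X$, and by symmetry a $2\varepsilon$-isometry $g\colon\tilde X\to X$, which I would pick so that $g$ is an approximate inverse of $h$ via the standard net-based construction, giving $\rho\bigl(g(h(x)),x\bigr)\le C\varepsilon$ for an explicit small constant $C$. I would fix the time step $\delta=\sqrt\varepsilon$ and sample times $t_k=k\delta$, $k=0,\dots,\lceil T/\delta\rceil$; note $\delta<\alpha<1$ since $\varepsilon<\alpha^2$.

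\textbf{Construction of the strategy.} Given Man's trajectory $M\in\lip(X)$, I would build a virtual Man $\tilde M\in\lip(\tilde X)$ by sampling and geodesic interpolation: set the targets $p_k=h(M(t_k))$ and let $\tilde M$ move at unit speed along a geodesic toward the \emph{previously known} target $p_k$ on each interval $[t_k,t_{k+1}]$ (using that $\tilde X$ is geodesic). This keeps $\tilde M$ admissible and causal in $M$. Since $\tilde\rho(p_k,p_{k+1})\le\rho(M(t_k),M(t_{k+1}))+2\varepsilon\le\delta+2\varepsilon$ while $\tilde M$ advances $\delta$ per step, the lag obeys $\tilde\rho(\tilde M(t_k),p_k)\le 2k\varepsilon+O(\delta)$. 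I would then feed $\tilde M$ to the given winning family, $\tilde L=\tilde{\mathfrak s}_{h(L_0)}(\tilde M)$, obtaining a virtual time $s^\ast\in[0,T]$ with $\tilde\rho(\tilde M(s^\ast),\tilde L(s^\ast))\le\alpha$. Finally I would define the real Lion $L\in\lip(X)$ to chase the targets $q_k=g(\tilde L(t_k))$ by the same interpolation scheme, starting from $L_0$ with $\rho(L_0,q_0)\le C\varepsilon$. Using previous-sample targets on both sides makes the composite map $M\mapsto L$ non-anticipative (indeed stepwise), and the Lion lag likewise satisfies $\rho(L(t_k),q_k)\le 2k\varepsilon+O(\delta)$.

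\textbf{Capture estimate.} Evaluating at the sample time $t^\ast=t_j$ nearest $s^\ast$, I would bound $\rho(M(t^\ast),L(t^\ast))$ by the triangle-inequality chain through $g(h(M(t^\ast)))$, $g(\tilde M(t^\ast))$ and $g(\tilde L(t^\ast))$. The four summands are controlled, respectively, by the approximate-inverse estimate ($O(\varepsilon)$), the Man lag plus $\dis g$ ($\le 2j\varepsilon+O(\varepsilon)$), the virtual capture bound transported by $\dis g$ ($\le\alpha+2\delta+O(\varepsilon)$, where the $2\delta$ comes from replacing $s^\ast$ by $t_j$ and the $1$-Lipschitz continuity of $\tilde M,\tilde L$), and the Lion lag ($\le 2j\varepsilon+O(\varepsilon)$). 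Adding these and using $j\delta\le T$, so $2j\varepsilon\le 2T\sqrt\varepsilon$, together with $\delta=\sqrt\varepsilon$ and $\varepsilon<\sqrt\varepsilon$ (from $\varepsilon<\alpha^2<1$) to absorb the $O(\varepsilon)$ terms into $\sqrt\varepsilon$, yields a bound of the form $\alpha+(cT+c')\sqrt\varepsilon$; careful bookkeeping of the constants (in particular the larger distortion of the approximate inverse and the one-step delays) produces the stated $\alpha+(20T+8)\sqrt\varepsilon$.

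\textbf{Main obstacle.} The technical heart is that $h$ and $g$ are only approximate isometries and need not even be continuous, so $h\circ M$ and $g\circ\tilde L$ are neither $1$-Lipschitz nor continuous; this forces the sample-and-interpolate construction, which in turn introduces a \emph{cumulative} lag of order $\varepsilon$ per step and hence the linear-in-$T$ coefficient. The delicate point is to align the single virtual capture instant $s^\ast$ with one honest real time $t^\ast$ for both players at once while keeping the strategy causal — i.e. ensuring that the two independent lags (Man's and Lion's), the distortion, and the discretization error all refer to the same $t^\ast$. Making admissibility, causality, and synchronized cumulative error mutually compatible is the crux; the remaining distortion and net estimates are routine.
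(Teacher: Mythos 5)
Your proposal follows essentially the same route as the paper's own proof: an approximate-isometry pair with a net-based quasi-inverse (the paper's ``$\varepsilon$-chaining''), time step $\sqrt{\varepsilon}$, a virtual Man in $\tilde{X}$ built by greedy sample-and-pursue interpolation of $h(M(t_k))$, feeding it to the given winning strategy, mirroring the resulting virtual Lion back into $X$ by the same greedy pursuit of $g(\tilde{L}(t_k))$, and a triangle-inequality estimate at the grid time nearest the virtual capture instant, with cumulative lags of order $T\sqrt{\varepsilon}$. The differences are cosmetic (your maps $h$, $g$ play the roles of the paper's chaining maps $\tilde{f}$, $f$, with slightly different distortion constants), so this is the paper's argument in all essentials.
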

\begin{remark}
Since the Gromov--Hausdorff distance is never greater than any Hausdorff distance (among possible isometric embeddings),  it suffices to check the  Hausdorff distance  inequality $\dH(\tilde{X}, X)\leq\varepsilon$  instead of the Gromov--Hausdorff one.
\end{remark}
\begin{remark}
Although we consider non-anticipative strategies, to prove the theorem, we construct $\sqrt{\varepsilon}$-stepwise strategies. Hence, all statements hold for the so-called discrete-time Lion and Man games.
\end{remark}
The following corollary is trivial, however, it provides a necessary condition of capture, which is quite rare.
\begin{corollary}
If  there are no  $(\tilde{X}, \tilde{\rho}, \alpha + (20T +8)\sqrt{\varepsilon}, T)$-winning strategies for $d_{GH}(\tilde{X}, X)\leq\varepsilon\in (0, \alpha^2)$, then there are no  $(X, \rho, \alpha, \  T)$-winning strategies.
\end{corollary}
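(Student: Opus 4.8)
The plan is to obtain the corollary directly from Theorem~\ref{th-x-eps}, using only the symmetry of the Gromov--Hausdorff distance together with the contrapositive; no new game-theoretic construction is required.

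First I would note that the hypothesis $\dgh(\tilde{X}, X) \leq \varepsilon$ is symmetric, since $\dgh$ is a metric and hence $\dgh(X, \tilde{X}) = \dgh(\tilde{X}, X) \leq \varepsilon$. The remaining standing assumptions, namely $T > 0$, $\alpha \in (0,1)$, and $\varepsilon \in (0, \alpha^2)$, constrain only the scalars $T, \alpha, \varepsilon$ and are untouched by interchanging the labels of the two spaces. Consequently Theorem~\ref{th-x-eps} applies verbatim after exchanging the roles of $(\tilde{X}, \tilde{\rho})$ and $(X, \rho)$: reading $X$ as the source space and $\tilde{X}$ as the target space, the theorem yields that the existence of $(X, \rho, \alpha, T)$-winning strategies implies the existence of $(\tilde{X}, \tilde{\rho}, \alpha + (20T+8)\sqrt{\varepsilon}, T)$-winning strategies.

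Taking the contrapositive of this implication gives precisely the corollary: the absence of $(\tilde{X}, \tilde{\rho}, \alpha + (20T+8)\sqrt{\varepsilon}, T)$-winning strategies forces the absence of $(X, \rho, \alpha, T)$-winning strategies. The only point deserving attention is to confirm that Theorem~\ref{th-x-eps} is genuinely invariant under relabeling the two spaces, i.e.\ that it depends on the unordered pair of spaces only through the symmetric quantity $\dgh$; this is immediate from the statement, so there is no real obstacle beyond keeping track of the direction of the implication when passing to the contrapositive.
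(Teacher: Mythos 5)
Your proposal is correct and matches the paper's intended argument: the paper states this corollary without proof (calling it trivial), and the implicit justification is exactly your route, namely applying Theorem~\ref{th-x-eps} with the roles of $(\tilde{X},\tilde{\rho})$ and $(X,\rho)$ exchanged (legitimate since $\dgh$ and all the scalar hypotheses are symmetric in the two spaces) and then taking the contrapositive. Nothing is missing, and the relabeling point you flag is indeed the only thing to check.
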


Moreover, one can combine these results  with the fact that, for any compact geodesic (as well as length) space~$(X, \rho),$ there exists a sequence of  finite metric  graphs  $(G_n, \rho_n)$  such that $\dgh(X, G_n)\rightarrow 0$ as $n\rightarrow \infty$ (see \cite[Proposition~7.5.5.]{burago}).  The strict definition of finite metric graph is not short (see \cite[Def. 3.2.11.]{burago} or \cite[I.1.9]{bh}), although the notion  is quite intuitive. Roughly speaking, a finite metric  graph is a metric space represented as a finite number of vertices and a finite number of segments connecting some of these vertices. 
 So this encourages one to try to check $\alpha$-capture in some approximating spaces before doing this  in the approximated space. 
  In this way, Theorem \ref{th-x-eps} yields the following corollary.
\begin{corollary}
\label{cor-graph}
Let  $(X, \rho)$ be a compact geodesic space and let $\{(G_n, \rho_n)\}_{n \in \mathbb{N}}$ be a sequence of finite metric  graphs. 
If  $d_{GH}(X, G_n)\rightarrow 0$ as $n\rightarrow \infty,$ then the following statements are equivalent: 
\begin{enumerate}
\item for all $\tilde{\alpha}>\alpha$ there exist $(X, \rho, \tilde{\alpha}, T)$-winning strategies;
\item for each $n\in \mathbb{N}$ there exist $(G_n, \rho_n, \alpha_n, T)$-winning strategies and $\alpha_n \rightarrow \alpha$ as $n\rightarrow\infty.$
\end{enumerate}
\end{corollary}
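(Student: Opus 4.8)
The plan is to derive both implications of the equivalence directly from Theorem~\ref{th-x-eps}, applied in the two opposite directions, together with two elementary observations: first, that enlarging the capture radius preserves the existence of winning strategies (monotonicity, since $\inf_t\rho(M(t),L(t))\le\beta\le\tilde\alpha$ whenever $\beta\le\tilde\alpha$); and second, that every finite metric graph is a compact geodesic space, so the theorem may legitimately be applied with either $X$ or $G_n$ in the role of the source space. Throughout I write $\varepsilon_n := \dgh(X, G_n)$, so that $\varepsilon_n \to 0$ by hypothesis, abbreviate $C := 20T + 8$, and work under the standing assumption $\alpha \in (0,1)$ inherited from Theorem~\ref{th-x-eps}.

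For the implication (1)$\Rightarrow$(2) I fix, for each $n$, a number $\tilde{\alpha}_n > \alpha$ with $\tilde{\alpha}_n \to \alpha$ (for instance $\tilde{\alpha}_n = \min\{\alpha + 1/n,\ (1+\alpha)/2\}$, which stays in $(\alpha,1)$). Since $\varepsilon_n \to 0$ while $\tilde{\alpha}_n^2 \to \alpha^2 > 0$, for all sufficiently large $n$ one has $\tilde{\alpha}_n \in (0,1)$ and $\varepsilon_n \in (0, \tilde{\alpha}_n^2)$, so Theorem~\ref{th-x-eps} applies with source $(\tilde{X}, \tilde{\rho}) = (X, \rho)$ (using the $(X,\rho,\tilde\alpha_n,T)$-winning strategies guaranteed by~(1)) and target $(G_n, \rho_n)$; it yields $(G_n, \rho_n, \alpha_n, T)$-winning strategies with $\alpha_n := \tilde{\alpha}_n + C\sqrt{\varepsilon_n}$, whence $\alpha_n \to \alpha$. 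For the finitely many remaining small indices I set $\alpha_n$ equal to the diameter of $G_n$, for which winning is trivial (Lion stays still); these finitely many terms do not affect the limit $\alpha_n \to \alpha$.

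For the implication (2)$\Rightarrow$(1) I fix an arbitrary $\tilde{\alpha} > \alpha$ and apply Theorem~\ref{th-x-eps} with source $(G_n, \rho_n)$ and target $(X, \rho)$. As $n \to \infty$ we have $\alpha_n \to \alpha < 1$ and $\varepsilon_n \to 0$, so for all large $n$ the hypotheses $\alpha_n \in (0,1)$ and $\varepsilon_n \in (0, \alpha_n^2)$ hold; the theorem then produces $(X, \rho, \beta_n, T)$-winning strategies with $\beta_n := \alpha_n + C\sqrt{\varepsilon_n}$. Since $\beta_n \to \alpha < \tilde{\alpha}$, there exists an $n$ with $\beta_n \le \tilde{\alpha}$, and by monotonicity a strategy winning for radius $\beta_n$ is winning for the larger radius $\tilde{\alpha}$. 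Hence $(X, \rho, \tilde{\alpha}, T)$-winning strategies exist, and since $\tilde{\alpha} > \alpha$ was arbitrary, statement~(1) follows.

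The argument is thus almost entirely a packaging of Theorem~\ref{th-x-eps}; the only points demanding care are the side conditions $\alpha \in (0,1)$ and $\varepsilon \in (0, \alpha^2)$ imposed there. I expect the main (if modest) obstacle to be verifying that these constraints hold for all but finitely many $n$ and disposing of the excluded indices. This is precisely why statement~(1) is phrased with the strict inequality $\tilde{\alpha} > \alpha$ --- so that there is always room to insert an auxiliary radius strictly between $\alpha$ and $\tilde{\alpha}$ --- and why trivial large-radius capture is invoked for the initial terms of the sequence.
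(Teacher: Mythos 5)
Your proof is correct and is precisely the derivation the paper intends: the paper states this corollary without any written proof, presenting it as an immediate consequence of Theorem~\ref{th-x-eps} applied in both directions together with monotonicity of the capture radius, which is exactly your packaging (including the care taken with the side conditions $\alpha\in(0,1)$ and $\varepsilon\in(0,\alpha^2)$, which the corollary's statement leaves implicit). The only loose end is trivial: your $\varepsilon_n:=\dgh(X,G_n)$ could equal $0$ for some $n$ (if $G_n$ is isometric to $X$), violating the strict positivity required by the theorem, but since the theorem only needs $\dgh\leq\varepsilon$ you may simply apply it with any positive $\varepsilon<\tilde{\alpha}_n^2$ in place of $\varepsilon_n$.
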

\begin{remark}
There is a constructive method to build such graphs.
\end{remark}
Thus, investigating the game in finite metric graphs should be useful. In particular, if we knew `good enough' pursuer's strategies in graphs, we would easily construct `good enough' strategies in any compact metric space. %
 
\section{Proofs}
\label{sec-proof}
We begin by constructing  $\beta$-pursuing curves (Subsection \ref{ssec-sim-pur}) and $\varepsilon$-chaining relations between spaces (Subsection \ref{ssec-ch}). These subsections are  auxiliary for Subsection \ref{ssec-proof}, which is directed towards the target proof. 
 
\subsection{Pursuit}  
\label{ssec-sim-pur}

We need to use the stepwise procedure of simple (greedy) pursuit in the following form. 
 \begin{definition}
 \label{def-simple-pursuit}
Let $N$ be a natural number, let $\beta>0,$ and let $(X, \rho)$ be a compact geodesic space.  We say that a 1-Lipschitz curve $\hat{L} \colon [0, N\beta] \to X$ $\beta$-pursues 
a tuple $\{\hat{M}_i\}_{i=\overline{0,..,N}}, $ where $\hat{M}_i \in X$  iff, 
for each $i = \overline{0,..,N-1},$
\begin{enumerate}
\item the condition  $\rho\bigl(\hat{L}(i\beta), \hat{M}_i \bigr)> \beta$ implies
\begin{equation*}
\begin{split}
\rho\bigl(\hat{L}(i \beta), \hat{L}((i+1)\beta)\bigr) &=  \beta,
\\
\rho\bigl(\hat{L}(i\beta), \hat{L}((i+1)\beta)\bigr)+ \rho\bigl(\hat{L}((i+1)\beta), \hat{M}_i\bigr) &= \rho\bigl(\hat{L}(i\beta), \hat{M}_i \bigr); 
\end{split}
\end{equation*}
\item the opposite condition $\rho\bigl(\hat{L}(i\beta), \hat{M}_i\bigr)\leq \beta$ implies
\begin{equation*}
\hat{L}\bigl((i+1)\beta\bigr) =  \hat{M}_i.
\end{equation*}
\end{enumerate}
\end{definition}
\begin{remark}
\label{rem-tr-eq}
Whatever the numbers $\beta$ and $N$ and points $\hat{L}_0$ and $\{\hat{M}_i\}_{i=\overline{0,..,N-1}}$ are, there is a curve that $\beta$-pursues the tuple $\{\hat{M}_i\}_{i=\overline{0,..,N-1}}$ and satisfies $\hat{L}(0) = \hat{L}_0.$ 
Indeed, the equalities 
of Item 1 of Definition \ref{def-simple-pursuit} mean that the restriction $\hat{L}|_{[i\beta, (i+1)\beta]}$ is a geodesic path and the restriction of a geodesic path between $\hat{L}(i\beta)$ and $\hat{M}(i\beta).$ 
Since we assume that $(X, \rho)$ is a compact geodesic space, then such a geodesic path exists, but may not be unique.
\end{remark}
\begin{observation} 
\label{obs-tuple}
Whatever numbers $\beta$ and $N$ and pursuer's initial position are, if a tuple  $\{\hat{M}_i\}_{i=\overline{0,..,N}} $ is such that each $\hat{M}_i$ depends on the restriction $M|_{[0, i\beta]}$ only (for $i=\overline{0,..,N}$), then the pursuer has a $\beta$-stepwise strategy that provides that the pursuer's trajectory $\beta$-pursues the tuple $\{\hat{M}_i\}_{i=\overline{0,..,N}}. $
\end{observation}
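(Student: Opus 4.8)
The plan is to build the desired strategy explicitly by running the greedy pursuit recursion of Definition \ref{def-simple-pursuit} on the successive intervals $[i\beta,(i+1)\beta]$, reading off the target point $\hat{M}_i$ at the very moment the pursuer begins the $i$-th step. First I would fix, once and for all, a rule assigning to each ordered pair of points $(p,q)\in X\times X$ a concrete $1$-Lipschitz path on $[0,\beta]$ starting at $p$ that realizes the move prescribed by Definition \ref{def-simple-pursuit}: a geodesic segment of length $\beta$ toward $q$ when $\rho(p,q)>\beta$, and a path ending at $q$ when $\rho(p,q)\le\beta$. Remark \ref{rem-tr-eq} guarantees that such a path exists, and fixing one selection for every pair turns the step map into a genuine function of $\bigl(p,\hat{M}_i\bigr)$ alone, which is exactly what is needed for the strategy to be well defined (rather than a multivalued procedure).

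Given a Man trajectory $M\in\lip(X)$, I would then define $\mathfrak{s}_{L_0}(M)$ recursively: set $\hat{L}(0)=L_0$, and for $i=0,\dots,N-1$ produce the restriction $\hat{L}|_{[i\beta,(i+1)\beta]}$ by applying the fixed selection to the pair $\bigl(\hat{L}(i\beta),\hat{M}_i\bigr)$, finally extending $\hat{L}$ beyond $N\beta$ by the constant value $\hat{L}(N\beta)$ so that the output lies in $\lip(X)$. Continuity at the junction times $i\beta$ and the $1$-Lipschitz bound on each piece make $\hat{L}$ a $1$-Lipschitz curve, and by construction it $\beta$-pursues the tuple $\{\hat{M}_i\}$, so the only thing left to verify is the $\beta$-stepwise implication.

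The heart of the argument is the dependency bookkeeping, which fits the one-step delay in the definition of a $\beta$-stepwise strategy exactly. By induction, the point $\hat{L}(i\beta)$ is determined by $\hat{M}_0,\dots,\hat{M}_{i-1}$, each of which depends only on $M|_{[0,(i-1)\beta]}$; hence the piece $\hat{L}|_{[i\beta,(i+1)\beta]}$, being a function of $\hat{L}(i\beta)$ and $\hat{M}_i$, depends only on $M|_{[0,i\beta]}$. Consequently the whole curve $\hat{L}|_{[0,(n+1)\beta]}$ is a function of $M|_{[0,n\beta]}$: if two Man trajectories agree on $[0,n\beta]$, then all of $\hat{M}_0,\dots,\hat{M}_n$ coincide and the two output curves coincide on $[0,(n+1)\beta]$, which is precisely the required implication.

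I expect the genuinely delicate point to be not any single estimate but the careful separation of \emph{when} each datum becomes available: the target $\hat{M}_i$ is allowed to use $M|_{[0,i\beta]}$, yet it is consumed at the start of the step $[i\beta,(i+1)\beta]$, so that the trajectory over $[0,(n+1)\beta]$ never depends on $M$ beyond time $n\beta$. Keeping this alignment explicit---together with the deterministic geodesic selection that makes $\mathfrak{s}_{L_0}$ a bona fide map---is the main thing to get right; the remainder is the routine check that the recursion indeed meets the two cases of Definition \ref{def-simple-pursuit}.
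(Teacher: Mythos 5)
Your proposal is correct: the paper states this Observation without any proof, relying only on Remark \ref{rem-tr-eq} for the existence of $\beta$-pursuing curves, and your argument --- a fixed deterministic selection of geodesic segments to make the step map single-valued, a recursive definition of the trajectory, and the bookkeeping showing that $\hat{L}|_{[0,(n+1)\beta]}$ depends only on $M|_{[0,n\beta]}$ --- is exactly the construction the paper treats as self-evident. In particular you correctly identified the two points the paper glosses over: non-uniqueness of geodesics (requiring a selection so that $\mathfrak{s}_{L_0}$ is a genuine map) and the alignment of the one-step delay in the $\beta$-stepwise definition with the availability of $\hat{M}_i$ at time $i\beta$.
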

\begin{lemma}
 \label{lem-simple-pursuit}
Let $(X, \rho)$ be a compact geodesic space. If a 1-Lipschitz curve $\hat{L} \colon [0, N\beta] \to X$ $\beta$-pursues a tuple $\{\hat{M}_i\}_{i=\overline{0,..,N}}$ satisfying 
$$\rho\bigl(\hat{L}(i\beta), \hat{M}_i\bigr)\leq \beta(1+\delta) \qquad \forall i=\overline{0,..,N-1},$$
 then
\begin{equation*}
\rho\bigl(\hat{L}(N\beta), \hat{M}_N\bigr)\leq \beta + N\delta\beta + \rho\bigl(\hat{L}(0), \hat{M}_0\bigr).
\end{equation*}
\end{lemma}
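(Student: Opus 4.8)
The plan is to prove the statement by induction on $N$, tracking how the distance between the pursuer and the $i$-th target evolves from one step to the next. The key quantity to control is $\rho(\hat L(i\beta), \hat M_i)$ for each $i$, and the hypothesis $\rho(\hat L(i\beta), \hat M_i) \le \beta(1+\delta)$ is exactly what guarantees that each step closes in efficiently.

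\medskip

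\emph{Setting up the one-step estimate.} First I would analyze a single step, passing from index $i$ to index $i+1$. The goal is a recursion of the form
\begin{equation*}
\rho\bigl(\hat L((i+1)\beta), \hat M_{i+1}\bigr) \le \rho\bigl(\hat L((i+1)\beta), \hat M_i\bigr) + \rho\bigl(\hat M_i, \hat M_{i+1}\bigr),
\end{equation*}
by the triangle inequality, so the task reduces to bounding $\rho(\hat L((i+1)\beta), \hat M_i)$ using the pursuit rule. In Case~2 of Definition~\ref{def-simple-pursuit}, where $\rho(\hat L(i\beta), \hat M_i) \le \beta$, the curve jumps directly onto $\hat M_i$, so $\rho(\hat L((i+1)\beta), \hat M_i) = 0$. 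In Case~1, where $\rho(\hat L(i\beta), \hat M_i) > \beta$, the pursuer travels a geodesic of length exactly $\beta$ straight toward $\hat M_i$, so the colinearity equation gives $\rho(\hat L((i+1)\beta), \hat M_i) = \rho(\hat L(i\beta), \hat M_i) - \beta$. Invoking the hypothesis $\rho(\hat L(i\beta), \hat M_i) \le \beta(1+\delta)$, this is at most $\beta\delta$. Thus in both cases $\rho(\hat L((i+1)\beta), \hat M_i) \le \beta\delta$.

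\medskip

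\emph{Controlling the target drift and assembling the induction.} The remaining ingredient is a bound on the consecutive target drift $\rho(\hat M_i, \hat M_{i+1})$. Here the hypothesis is again decisive: since both $\hat L((i+1)\beta)$ lies within $\beta\delta$ of $\hat M_i$ and, applying the hypothesis at index $i+1$, the distance $\rho(\hat L((i+1)\beta), \hat M_{i+1}) \le \beta(1+\delta)$, the triangle inequality bounds each step's contribution. More directly, combining the one-step estimate with the triangle inequality yields
\begin{equation*}
\rho\bigl(\hat L((i+1)\beta), \hat M_{i+1}\bigr) \le \beta\delta + \rho\bigl(\hat M_i, \hat M_{i+1}\bigr),
\end{equation*}
but to obtain the stated conclusion I would instead telescope the defect $\rho(\hat L(i\beta), \hat M_i) - \rho(\hat L(0), \hat M_0)$ across all $N$ steps. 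Each step contributes an increment of at most $\beta\delta$ coming from the $(1+\delta)$-slack, plus the boundary term $\beta$ from the final index where the hypothesis on $\hat M_N$ need not hold. Summing the $N$ increments gives the $N\delta\beta$ term, the terminal step gives the additive $\beta$, and the initial term $\rho(\hat L(0), \hat M_0)$ persists, producing exactly
\begin{equation*}
\rho\bigl(\hat L(N\beta), \hat M_N\bigr) \le \beta + N\delta\beta + \rho\bigl(\hat L(0), \hat M_0\bigr).
\end{equation*}

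\medskip

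\emph{The main obstacle} will be bookkeeping the two cases of the pursuit rule uniformly inside the induction, since Case~2 (capture within one step) resets the distance to zero whereas Case~1 decreases it by $\beta$; I expect to handle this by showing that in both cases the post-step distance to $\hat M_i$ is at most $\beta\delta$, after which the target drift $\rho(\hat M_i, \hat M_{i+1})$ — itself controlled through the hypothesis applied at consecutive indices — feeds cleanly into the telescoping sum. Care is also needed at the final index $N$, where the hypothesis $\rho(\hat L(i\beta), \hat M_i) \le \beta(1+\delta)$ is only assumed for $i \le N-1$, which is precisely why the isolated additive $\beta$ appears rather than a tighter bound.
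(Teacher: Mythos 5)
There is a genuine gap, and it sits exactly where you sensed that ``care is also needed'': at the final index. You read the displayed hypothesis literally, as a pursuer-to-target bound $\rho\bigl(\hat L(i\beta),\hat M_i\bigr)\le\beta(1+\delta)$ for $i\le N-1$. Under that reading the lemma is simply false, and no telescoping can rescue it: the point $\hat M_N$ appears neither in the pursuit conditions of Definition~\ref{def-simple-pursuit} (whose two items involve $\hat M_i$ only for $i\le N-1$) nor in the hypothesis, so nothing whatsoever bounds $\rho(\hat M_{N-1},\hat M_N)$. Concretely, take $X=[0,100]$, $N=1$, $\beta=1$, $\delta=0$, $\hat L(0)=\hat M_0=0$ and $\hat M_1=100$: the pursuit rule forces $\hat L(\beta)=\hat M_0$, all assumptions hold, yet $\rho\bigl(\hat L(\beta),\hat M_1\bigr)=100>\beta$. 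Your claim that ``the terminal step gives the additive $\beta$'' is therefore unsupported --- the lone $\beta$ cannot absorb an arbitrary terminal drift. The paper's own proof reveals that the printed hypothesis is a misprint: in its induction step it bounds $\rho(\hat M_i,\hat M_{i+1})\le\beta(1+\delta)$ ``by the hypothesis of the lemma,'' i.e., the intended hypothesis is a \emph{drift bound on consecutive targets}, and this is precisely the property verified when the lemma is applied in the proof of Theorem~\ref{th-x-eps} (the tuples $\{\tilde f(M(i\beta))\}$ and $\{f(\tilde L(i\beta))\}$ satisfy $\tilde\rho\bigl(\tilde f(M(i\beta)),\tilde f(M((i+1)\beta))\bigr)\le\beta+\dis\tilde f$, with $\delta=\dis\tilde f/\beta$). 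A further sign of the misprint: taken literally, the hypothesis already nearly contains the conclusion for every $i\le N-1$ and says nothing at $i=N$, making the lemma vacuous where true and false where it has content.

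A second, related defect is that your telescoping is never actually executed and its bookkeeping does not close. Under the literal hypothesis the quantity $\rho\bigl(\hat L(i\beta),\hat M_i\bigr)$ is re-capped at $\beta(1+\delta)$ at every step \emph{by assumption}, so there is no accumulating defect and no source for the $N\delta\beta$ term; moreover your derived drift bound invokes the hypothesis at index $i+1$, which is unavailable already for $i=N-1$. Your local estimates are the right ingredients --- in the far case the colinearity equalities of Item~1 give $\rho\bigl(\hat L((i+1)\beta),\hat M_i\bigr)=\rho\bigl(\hat L(i\beta),\hat M_i\bigr)-\beta$, and in the close case $\hat L((i+1)\beta)=\hat M_i$ --- but the correct assembly is the paper's: an induction on $i$ with the invariant $\rho\bigl(\hat L(i\beta),\hat M_i\bigr)\le\beta+i\delta\beta+\rho\bigl(\hat L(0),\hat M_0\bigr)$, where in the far case the gain of exactly $\beta$ along the geodesic offsets all but $\delta\beta$ of the drift $\beta(1+\delta)$, and in the close case the distance resets to at most that drift. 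To repair your write-up, replace the hypothesis by $\rho(\hat M_i,\hat M_{i+1})\le\beta(1+\delta)$ for $i=\overline{0,..,N-1}$, drop the step where you cap the post-step distance at $\beta\delta$ (it rests on the literal hypothesis), and carry the inductive invariant instead.
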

\begin{proof}
Let us prove this by induction on $N.$ The  induction basis is trivial: $\rho\bigl(\hat{L}(0), \hat{M}_0\bigr)\leq \beta + 0\delta\beta + \rho\bigl(\hat{L}(0), \hat{M}_0\bigr).$ Further,  the inequality hypothesis is
\begin{equation}
\label{eq-ind}
\rho\bigl(\hat{L}(i\beta), \hat{M}_i \bigr)\leq \beta + i\delta\beta + \rho\bigl(\hat{L}(0), \hat{M}_0\bigr).
\end{equation}
Let us show that this inequality still holds for $(i+1)$ instead of $i.$
Consider two cases. First, if we have  $\rho\bigl(\hat{L}(i\beta), \hat{M}_i \bigr)\leq\beta,$  then $\hat{L}((i+1)\beta) = \hat{M}_i$ by~Item~2 of Definition~\ref{def-simple-pursuit}  and, as a corollary,
\begin{eqnarray*}
\rho\bigl(\hat{L}((i+1)\beta), \hat{M}_{i+1}\bigr)&\leq& \rho(\hat{M}_i, \hat{M}_{i+1} ) \\
 &\leq& \beta(1+\delta) \leq \beta + (i+1)\delta\beta + \rho\bigl(\hat{L}(0), \hat{M}_0\bigr).
\end{eqnarray*}

Second, if we have $\rho\bigl(\hat{L}(i\beta), \hat{M}_i\bigr)>\beta,$  then the following holds:
$$\begin{array}{lr}
 \quad  \rho\bigl(\hat{L}((i+1)\beta), \hat{M}_{i+1}\bigr) \quad   &\mbox{ by triangle equality} \\
\leq   \rho\bigl(\hat{L}((i+1)\beta), \hat{M}_i\bigr) + \rho\bigl(\hat{M}_i, \hat{M}_{i+1}\bigr)
&\mbox{ by the hypothesis of the lemma } \\
\leq  \rho\bigl(\hat{L}((i+1)\beta), \hat{M}_i\bigr)  + \beta + \delta\beta 
  &\mbox{ by Item~1 of Definition~\ref{def-simple-pursuit}}  \\
= \rho\bigl(\hat{L}(i\beta), \hat{M}_i\bigr) - \rho\bigl(\hat{L}(i\beta), \hat{L}((i+1)\beta)\bigr)  + \beta + \delta\beta
 &\mbox{ by Item 1 of Definition~\ref{def-simple-pursuit}}  \\
= \rho\bigl(\hat{L}(i\beta), \hat{M}_i \bigr) + \delta\beta 
&\mbox{ by the induction assumption (\ref{eq-ind})} \\
\leq  \beta + (i+1)\delta\beta + \rho\bigl(\hat{L}(0), \hat{M}_0\bigr).	  
& 
\end{array}$$
\end{proof}

\subsection{Chaining} 
\label{ssec-ch}

\begin{definition}
\label{def-ch}
Let $(X, \rho)$ and $(\tilde{X}, \tilde{\rho})$ be metric spaces 
and let  
$$f \colon \tilde{X} \to X, \quad \tilde{f}\colon X \to \tilde{X}. $$
We say that the pair of maps $(f, \tilde{f})$ is an {\it   $\varepsilon$-chaining between $X$ and $\tilde{X}$} if there exist
\begin{enumerate}
\item  finite $\varepsilon$-nets $R_{X}$ and $R_{\tilde{X}}$ in $X$ and $\tilde{X}$ respectively;
\item bijections $h \colon R_{\tilde{X}} \to R_{X}$  and $\tilde{h} = h^{-1}\colon R_{\tilde{X}}\to R_X$
	such that $$\dis h \leq \varepsilon/2, \quad \dis \tilde{h} \leq \varepsilon/2;$$
\item maps $g\colon X\to R_{X}$ and $\tilde{g}\colon \tilde{X}\to R_{\tilde{X}}$ 
	such that 
\begin{eqnarray*}
	g(x) \in \arg\!\min\limits_{y\in R_X} \rho(x, y),& \quad &\tilde{f} = \tilde{h} \circ g, \\
\tilde{g}(\tilde{x}) \in \arg\!\min\limits_{\tilde{y}\in R_{\tilde{X}}} \rho(\tilde{x}, \tilde{y}),& \quad &f  = h \circ \tilde{g}. 
\end{eqnarray*}
\end{enumerate}
\end{definition}
\begin{remark}
\label{rem-ch}
If the pair $(f, \tilde{f})$ is an $\varepsilon$-chaining between spaces $X$ and $\tilde{X}$ and maps $g$ and $\tilde{g}$ are defined according to Definition \ref{def-ch}, then $\rho\bigl(x,g(x)\bigr)\leq \varepsilon$ for all $x\in X$ and $\tilde{\rho}\bigl(\tilde{x},\tilde{g}(\tilde{x})\bigr)\leq \varepsilon$ for all  $\tilde{x}\in \tilde{X}.$
\end{remark}
\begin{lemma}
\label{lem-ch-exist}
If $(X, \rho)$ and $(\tilde{X}, \tilde{\rho})$ are compact metric spaces and $\dgh(X, \tilde{X})\leq \varepsilon,$ then there exists $4\varepsilon$-chaining $(f, \tilde{f})$ between $X$ and $\tilde{X.}$
\end{lemma}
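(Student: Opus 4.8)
\emph{Plan.} The data I must produce are exactly those listed in Definition \ref{def-ch}: two finite $4\varepsilon$-nets $R_X\subset X$ and $R_{\tilde{X}}\subset\tilde{X}$, a bijection $h\colon R_{\tilde{X}}\to R_X$ (with inverse $\tilde{h}$) satisfying $\dis h\le 2\varepsilon$, and the two maps $g,\tilde{g}$. Since distortion is symmetric under $h\mapsto h^{-1}$, the single bound $\dis h\le 2\varepsilon$ yields $\dis\tilde{h}\le 2\varepsilon$ as well, matching the requirement $\varepsilon'/2=2\varepsilon$ for a $4\varepsilon$-chaining. Once $h,R_X,R_{\tilde{X}}$ are built, I would define $g\colon X\to R_X$ and $\tilde{g}\colon\tilde{X}\to R_{\tilde{X}}$ by selecting, for each point, any nearest net point (a minimizer exists because the nets are finite and nonempty), and then set $f=h\circ\tilde{g}$ and $\tilde{f}=\tilde{h}\circ g$. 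Thus the whole content is the construction of the two nets together with the bijection.

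\emph{Reduction to a common ambient space.} First I would pass to a metric realization of the Gromov--Hausdorff distance: since $X$ and $\tilde{X}$ are compact and $\dgh(X,\tilde{X})\le\varepsilon$, I may regard $X$ and $\tilde{X}$ as subspaces of a single metric space $(Z,d)$ with $\dH(X,\tilde{X})\le\varepsilon$, the restrictions of $d$ being $\rho$ and $\tilde{\rho}$ (for compact spaces the infimum in the definition of $\dgh$ is attained). The advantage is that distortion estimates become transparent: for any $p,q\in\tilde{X}$ and $x,x'\in X$ one has $|\tilde{\rho}(p,q)-\rho(x,x')|=|d(p,q)-d(x,x')|\le d(p,x)+d(q,x')$ by the triangle inequality.

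\emph{Matching nets by proximity.} Next I would take $R_{\tilde{X}}=\{p_1,\dots,p_n\}$ to be a maximal $2\varepsilon$-separated subset of $\tilde{X}$; it is finite by compactness and, by maximality, a $2\varepsilon$-net, hence a $4\varepsilon$-net. For each $p_i$ I would choose $x_i\in X$ with $d(p_i,x_i)\le\varepsilon$ (possible since $\dH(X,\tilde{X})\le\varepsilon$). These $x_i$ are pairwise distinct, because $d(x_i,x_j)\ge d(p_i,p_j)-d(p_i,x_i)-d(p_j,x_j)>2\varepsilon-\varepsilon-\varepsilon=0$, so $R_X=\{x_1,\dots,x_n\}$ has $n$ elements and $h(p_i)=x_i$ is a bijection. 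The displayed inequality with $x=x_i,x'=x_j$ gives $\dis h\le 2\varepsilon$. Finally $R_X$ is a $4\varepsilon$-net in $X$: for $x\in X$ pick $p\in\tilde{X}$ with $d(x,p)\le\varepsilon$, then $p_i\in R_{\tilde{X}}$ with $d(p,p_i)\le 2\varepsilon$, whence $d(x,x_i)\le\varepsilon+2\varepsilon+\varepsilon=4\varepsilon$.

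\emph{Main obstacle.} The delicate point is that one cannot simply transport a fine net of one space to the other by an almost-isometry (for instance via Lemma \ref{cor-bur}): the image of a fine net is only a \emph{coarse} net, so a naive bijection would violate the $4\varepsilon$-net requirement on one side. It is the ambient realization together with the maximal-separation choice that forces \emph{both} $R_X$ and $R_{\tilde{X}}$ to be $4\varepsilon$-nets while keeping $\dis h\le 2\varepsilon$; I would highlight that the constant $4$ is tight for this argument, since the net radius $4\varepsilon$ and the distortion bound $2\varepsilon=4\varepsilon/2$ both saturate Definition \ref{def-ch}. Assembling $R_X,R_{\tilde{X}},h,\tilde{h}$ with the nearest-point projections $g,\tilde{g}$ then yields the desired $4\varepsilon$-chaining.
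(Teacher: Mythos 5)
Your construction is correct in outline but takes a genuinely different route from the paper's, and it rests on one assertion that deserves more than a parenthesis. The paper never passes to a common ambient space: it applies Lemma~\ref{cor-bur} to obtain a $2\varepsilon$-isometry $\hat{h}\colon X\to\tilde{X}$, chooses a finite $2\varepsilon$-net $R_{\tilde{X}}$ inside the image $\hat{h}[X]$, pulls it back by selecting one preimage per point to get $R_X$, proves by contradiction (using $\dis\hat{h}\le 2\varepsilon$) that $R_X$ is a $4\varepsilon$-net, and takes $h=\hat{h}|_{R_X}$, which automatically has $\dis h\le 2\varepsilon$. This also answers your ``main obstacle'' paragraph, which slightly mischaracterizes the natural approach: transporting a net by an almost-isometry is not a dead end, because one chooses the net \emph{in the image} of $\hat{h}$ and pulls it back; the distortion bound then makes the pullback a coarser but still admissible ($4\varepsilon$-) net. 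Both arguments end at the same constants --- distortion $2\varepsilon$ and net radius $4\varepsilon$ --- with the factor $2$ entering in different places (for you, in the chain $\varepsilon+2\varepsilon+\varepsilon$; for the paper, in passing from $\dgh\le\varepsilon$ to a $2\varepsilon$-isometry).

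The step you must justify is the reduction itself: from $\dgh(X,\tilde{X})\le\varepsilon$ you claim an ambient realization $Z$ with $\dH(X,\tilde{X})\le\varepsilon$ \emph{exactly}. The paper's definition of $\dgh$ only yields, for every $\delta>0$, an ambient space with $\dH<\varepsilon+\delta$; attainment of the infimum for compact spaces is true but nontrivial (it follows from the existence of an optimal correspondence, or from known realization theorems, e.g.\ of Ivanov, Iliadis and Tuzhilin), and it is not among the facts the paper quotes. Without it, your argument as written produces only a $4(\varepsilon+\delta)$-chaining for each $\delta>0$, and the closed bounds $\dis h\le 2\varepsilon$ and the $4\varepsilon$-net property do not follow by simply letting $\delta\to 0$ unless you add a limiting argument (fix $R_{\tilde{X}}$, which is intrinsic to $\tilde{X}$, and let the transported tuples subconverge in $X^n$ using compactness of $X$). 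So either cite the realization theorem or run that limit explicitly; the paper's use of Lemma~\ref{cor-bur} is precisely what sidesteps this issue. A last small point: pairwise distinctness of the $x_i$ needs strict separation $\tilde{\rho}(p_i,p_j)>2\varepsilon$; under the convention that separated means distances $\ge 2\varepsilon$, your inequality chain only gives $d(x_i,x_j)\ge 0$. Use the strict convention (maximal such sets still exist and are still $2\varepsilon$-nets), or discard duplicates, noting that if $x_i=x_j$ then $\tilde{\rho}(p_i,p_j)\le 2\varepsilon$, so deleting $p_j$ keeps $R_{\tilde{X}}$ a $4\varepsilon$-net.
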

\begin{proof}	
We can pick a $2\varepsilon$-isometry $\hat{h}\colon X\to \tilde{X}$ in accordance with Lemma~\ref{cor-bur}. This implies that the set $\hat{h}[X]$ is a $2\varepsilon$-net in $\tilde{X},$ and, by virtue of compactness of $\tilde{X},$ we can select a finite subset of this $2\varepsilon$-net that is a $2\varepsilon$-net too; let us denote it by $R_{\tilde{X}}.$ For each $y\in R_{\tilde{X}},$ let us pick a point  $x_y$ in the non-empty set $\hat{h}^{-1}(y)$ and then consider $R_{X} = \{x_y \mid y\in R_{\tilde{X}}\}.$ Note that $|R_{\tilde{X}}| = |R_{X}|.$ 

Let us show that $R_X$ is a $4\varepsilon$-net in $X.$ Assume the converse, i.e., that there is a point  $x\in X$ such that $\rho(x, x_y)>4\varepsilon$ for all $x_y \in R_X.$ This yields that, for all  $y\in R_{\tilde{x}},$
\begin{equation}
\label{eq-lem-ch}
2\varepsilon = \dis\hat{h}\geq |\rho(x,x_y) - \tilde{\rho}\bigl(\hat{h}(x),\hat{h}(x_y)\bigr)| = |\rho(x,x_y) - \tilde{\rho}\bigl(\hat{h}(x),y\bigr)|. 
\end{equation}
Since $R_{\tilde{x}}$ is a $2\varepsilon$-net in $\tilde{X},$ there is $y\in R_{\tilde{x}}$ such that $\tilde{\rho}\bigl(\hat{h}(x),y\bigr)\leq 2\varepsilon.$ By this and by $\rho(x, x_y)>4\varepsilon,$ we obtain 
$$|\rho(x,x_y) - \tilde{\rho}\bigl(\hat{h}(x),y\bigr)|>4\varepsilon-2\varepsilon>2\varepsilon$$
and this contradicts  (\ref{eq-lem-ch}). Thus, $R_X$ is a $4\varepsilon$-net in $X.$

Further, note that the restriction $h = \hat{h}|_{R_X}$ is bijective and $$\dis h\leq \dis \hat{h}\leq 2\varepsilon.$$ Then, $\tilde{h} = h^{-1}$ is also bijective and satisfies $\dis \tilde{h}\leq 2\varepsilon.$ Since the sets $R_X$ and $R_{\tilde{X}}$ are finite, the maps $g$ and $\tilde{g}$ exist. Thus, the maps  $f  = h \circ \tilde{g}$ and $\tilde{f} = \tilde{h} \circ g $ are such that the pair $(f, \tilde{f})$ is a $4\varepsilon$-chaining between $X$ and $\tilde{X}.$
\end{proof}
\begin{lemma}
\label{lem-f-tilf}
If a pair $(f, \tilde{f})$ is a $4\varepsilon$-chaining between compact metric spaces $X$ and $\tilde{X},$ then \\
1. $\dis f \leq 10\varepsilon, \  \dis \tilde{f} \leq 10\varepsilon;$ \\
2. $\tilde{\rho}(\tilde{f}(f(\tilde{x})),\tilde{x})\leq 4\varepsilon$ holds for any $\tilde{x} \in \tilde{X}.$
\end{lemma}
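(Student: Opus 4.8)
The plan is to unwind the definitions of chaining and distortion, using the bounds $\dis h \le \varepsilon/2$ (which is $2\varepsilon$ in the $4\varepsilon$-chaining case) together with Remark~\ref{rem-ch}, which tells us that $g$ and $\tilde g$ move points by at most $4\varepsilon$.

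For Part~1, I would estimate $\dis f$ directly. Recall $f = h \circ \tilde g$, so for points $x_1, x_2 \in X$ I want to compare $\rho(x_1,x_2)$ with $\tilde\rho\bigl(f(x_1),f(x_2)\bigr)$. I would insert intermediate comparisons: first replace $x_1, x_2$ by their images $\tilde g(x_1), \tilde g(x_2) \in R_{\tilde X}$, paying $\le 4\varepsilon$ per point via Remark~\ref{rem-ch} and the triangle inequality on distances (this costs $8\varepsilon$ in the distance between the two pairs). Then the map $h$ acts on these net points, and $\dis h \le 2\varepsilon$ controls the distortion of that step. Summing the $8\varepsilon$ from the two net-projections with the $2\varepsilon$ from $h$ gives exactly $\dis f \le 10\varepsilon$. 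The argument for $\dis \tilde f$ is symmetric, using $\tilde f = \tilde h \circ g$ and $\dis \tilde h \le 2\varepsilon$.

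For Part~2, I would track a point $\tilde x \in \tilde X$ through the composition $\tilde f \circ f$. We have $f(\tilde x) = h(\tilde g(\tilde x))$, so $\tilde g(\tilde x)$ is a net point in $R_{\tilde X}$ at distance $\le 4\varepsilon$ from $\tilde x$. Then $g$ is applied to $f(\tilde x) = h(\tilde g(\tilde x)) \in R_X$; the key observation is that for a point already lying in the net $R_X$, the nearest-net-point map $g$ is the identity (its distance to itself is zero, so it minimizes). Hence $g(f(\tilde x)) = h(\tilde g(\tilde x))$, and applying $\tilde h = h^{-1}$ returns $\tilde f(f(\tilde x)) = \tilde h\bigl(h(\tilde g(\tilde x))\bigr) = \tilde g(\tilde x)$. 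Therefore $\tilde\rho\bigl(\tilde f(f(\tilde x)), \tilde x\bigr) = \tilde\rho\bigl(\tilde g(\tilde x), \tilde x\bigr) \le 4\varepsilon$ by Remark~\ref{rem-ch}, as required.

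The only delicate point is justifying that $g$ restricts to the identity on $R_X$ (and likewise $\tilde g$ on $R_{\tilde X}$): this relies on the $\argmin$ definition of $g$ attaining distance zero at net points, and on $h$ being a bijection onto $R_X$ so that $h(\tilde g(\tilde x))$ genuinely lands in the net. I expect Part~1 to be the more error-prone computation, since one must be careful to count the $4\varepsilon$ projection cost for each of the two points separately when comparing distances between pairs; the factor arithmetic $4\varepsilon + 4\varepsilon + 2\varepsilon = 10\varepsilon$ is where a misplaced constant would most likely creep in.
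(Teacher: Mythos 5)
Your proof is correct and takes essentially the same route as the paper: the same decomposition for Part~1 (a cost of $4\varepsilon + 4\varepsilon$ for projecting the two points onto the net via Remark~\ref{rem-ch}, plus $2\varepsilon$ for the distortion of the bijection between nets), and for Part~2 the same key observation that $g$ is the identity on $R_X$, so that $\tilde{f}\circ f$ collapses to $\tilde{g}$ and Remark~\ref{rem-ch} finishes the bound. The only blemish is notational: since $f = h\circ\tilde{g}$ maps $\tilde{X}$ to $X$, the points in your $\dis f$ computation should be taken in $\tilde{X}$ (comparing $\tilde{\rho}(\tilde{x}_1,\tilde{x}_2)$ with $\rho\bigl(f(\tilde{x}_1),f(\tilde{x}_2)\bigr)$), not in $X$ as written; the arithmetic is unaffected.
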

\begin{proof}
Recall  
that there are maps $h, g, \tilde{h},$ and $\tilde{g}$ from the definitions that satisfy, in particular, the equalities $f= h\circ\tilde{g}$ and $\tilde{f} = \tilde{h}\circ g$ and that
$$\dis \tilde{f} = \sup\limits_{x_1, x_2 \in X} |\rho(x_1, x_2) - \tilde{\rho}(\tilde{f}(x_1), \tilde{f}(x_2))|.$$

To prove the first statement,
let us show that $|\rho(x_1, x_2) - \tilde{\rho}(\tilde{f}(x_1), \tilde{f}(x_2))|\leq 10\varepsilon$ for all $x_1, x_2\in X.$ 
Indeed, 
for all $x_1, x_2\in X,$ 
 \begin{multline*}
|\rho(x_1, x_2) - \tilde{\rho}(\tilde{f}(x_1), \tilde{f}(x_2))|  \\
\leq |\rho(x_1, x_2) - \rho(g(x_1), g(x_2))| + |\rho(g(x_1), g(x_2)) - \tilde{\rho}(\tilde{f}(x_1), \tilde{f}(x_2))|.
 \end{multline*}
Further, we have
\begin{equation}
|\rho(x_1, x_2) - \rho(g(x_1), g(x_2))|\leq \rho(x_1, g(x_1)) + \rho(x_2, g(x_2))\leq 8\varepsilon
\end{equation}
due to the following  triangle inequalities:
\begin{eqnarray*}
\rho(x_1, x_2) - \rho(g(x_1), g(x_2))&\leq& \rho(x_1, g(x_1)) + \rho(x_2, g(x_2)), \\
\rho(g(x_1), g(x_2)) - \rho(x_1, x_2) &\leq& \rho(x_1, g(x_1)) + \rho(x_2, g(x_2)),
\end{eqnarray*}
and Remark \ref{rem-ch}.
In addition,  the second summand is expressed as follows:
\begin{eqnarray*}
& & |\rho(g(x_1), g(x_2)) - \tilde{\rho}(\tilde{f}(x_1), \tilde{f}(x_2))|  \\
&=&  |\rho(g(x_1), g(x_2)) - \tilde{\rho}(\tilde{h}g(x_1), \tilde{h}g(x_2))| \\  
&\leq&  \dis \tilde{h} \leq 2\varepsilon. 
\end{eqnarray*}
Thus,
$\dis \tilde{f} \leq 10\varepsilon$ since
$|\rho(x_1, x_2) - \tilde{\rho}(\tilde{f}(x_1), \tilde{f}(x_2))|\leq 10\varepsilon$ for all $x_1$ and $x_2.$ The same inequality holds for $\dis f.$

To prove the second statement, note that, for any $\tilde{x} \in \tilde{X},$ we have  $f(\tilde{x}) = h\circ \tilde{g} (\tilde{x})\in R_{X}.$  Hence, 
$$\tilde{f}\circ f(\tilde{x}) = \tilde{h}\circ g\circ f(\tilde{x}) =\tilde{h}\circ f(\tilde{x}) = \tilde{h}\circ h\circ \tilde{g}(\tilde{x}) = h^{-1}\circ h \circ \tilde{g}(\tilde{x}) = \tilde{g}(\tilde{x})$$
since $g$ is an identical map on~$R_{X}.$
Consequently, $$\tilde{\rho}\bigl(\tilde{f}(f(\tilde{x})),\tilde{x}\bigr)=\tilde{\rho}\bigl(\tilde{g}(\tilde{x}),\tilde{x}\bigr)\leq 4\varepsilon.$$
\end{proof}

\subsection{Proof of Theorem~\ref{th-x-eps}}
\label{ssec-proof}
We will construct Lion's $\beta$-stepwise strategy in $X$ for $\beta = \sqrt{\varepsilon}.$ Since we are interested in capture by the time $T,$ it suffices to describe Lion's strategy only for  $N = \left\lceil\frac{T}{\beta}\right\rceil$ steps relying on several auxiliary constructions and Observation~\ref{obs-tuple}.

 \paragraph{A construction of Lion's strategy.}
\begin{itemize}
\item Let $(f, \tilde{f})$ be a $4\varepsilon$-chaining  between the spaces $X$ and $\tilde{X}.$
\item  Let a curve $\tilde{M}\colon [0, N\beta]\to \tilde{X}$ satisfy the condition  $\tilde{M}(0) = \tilde{f}\bigl(M(0)\bigr)$ and let it $\beta$-pursue the tuple $\{\tilde{f}(M(i\beta))\}_{i=\overline{0,..,N}}.$

Then, since the tuple satisfies the property 
$$\tilde{\rho}\bigl(\tilde{f}(M(i\beta)), \tilde{f}(M((i+1)\beta)) \bigr)\leq \rho(M(i\beta), M((i+1)\beta)) + \dis \tilde{f} \leq \beta + \dis\tilde{f}$$
for all $i = \overline{0,..,N-1},$  applying Lemma~\ref{lem-simple-pursuit} for $\delta = \frac{\dis f}{\beta},$ we obtain
\begin{eqnarray}
\label{pur-m}
\tilde{\rho}\bigl(\tilde{f}(M(i\beta)), \tilde{M}(i\beta) \bigr)\leq \beta + i\dis\tilde{f} + \tilde{\rho}(\tilde{f}(M(0)), \tilde{M}(0)) =  \beta + i\dis\tilde{f}
\end{eqnarray}
for all $i = \overline{0,..,N-1}.$

\item  By the hypothesis of this theorem, for Lion's initial position $\tilde{f}(L(0))\in \tilde{X},$ there is a Lion's strategy $\tilde{\mathfrak{s}}_{\tilde{f}(L(0))}$ leading to $\alpha$-capture by the time $T$ in the space $\tilde{X}.$ 
To use this, note that $\tilde{M}$ is a 1-Lipschitz curve, i.e., its image belongs to the set of admissible Man's trajectories in the space $\tilde{X}.$
So, let a curve $\tilde{L}\colon [0,N\beta]\to \tilde{X}$ be equal to $\tilde{\mathfrak{s}}_{\tilde{f}(L(0))}(\tilde{M})$ on the time interval $[0, T]\subset [0, N\beta]$ and, moreover, let it satisfy the condition $\tilde{L}(0) = \tilde{f}(L(0)).$ 
\item Finally, let the Lion's trajectory $L(\cdot)$ $\beta$-pursue the tuple $\{f(\tilde{L}(i\beta))\}_{i=\overline{0,..,N}}.$  
As above, we have
$$\rho\bigl(f(\tilde{L}(i\beta)), f(\tilde{L}((i+1)\beta)) \bigr)\leq \tilde{\rho}(\tilde{L}(i\beta), \tilde{L}((i+1)\beta)) + \dis f \leq \beta + \dis f$$
for all $i = \overline{0,..,N-1},$ and  by Lemma~\ref{lem-simple-pursuit} for $\delta = \frac{\dis f}{\beta},$ we get
\begin{eqnarray}
\label{pur-l}
\rho\bigl(f(\tilde{L}(i\beta)), L(i\beta)) \bigr)\leq \beta + i\dis f + \rho(f(\tilde{L}(0)), L(0)) = \beta + i\dis f
\end{eqnarray}
for all $i = \overline{0,..,N-1}.$
\end{itemize}

\paragraph{Estimates.} It remains to estimate the guaranteed capture radius. Indeed, there exists time $t^* \in [0, T]$ such that $$\tilde{\rho}(\tilde{L}(t^*), \tilde{M}(t^*))=\tilde{\rho}(\tilde{\mathfrak{s}}_{\tilde{f}(L(0))}(\tilde{M})(t^*), \tilde{M}(t^*)) \leq\alpha$$ 
because the strategy $\tilde{\mathfrak{s}}_{\tilde{f}(L(0))}$ was chosen to lead to the $\alpha$-capture. For convenience, let us choose the number $\tau$ that is nearest to $t^*$ among numbers $i\beta$ for $i=\overline{1,..,N-1};$ therefore, 
\begin{eqnarray}
\label{eq-au}
\tilde{\rho}\bigl(\tilde{L}(\tau), \tilde{M}(\tau)\bigr)\leq \alpha + 2\beta.
\end{eqnarray}
Thus, we can estimate the distance  $\rho(L(\tau), M(\tau))$ in the following way:   
$$\begin{array}{lr}
\quad \rho(L(\tau), M(\tau)) 
	&\mbox{by triangle inequality} \\  
	\leq 
\rho(L(\tau), f(\tilde{L})(\tau)) + \rho(f(\tilde{L})(\tau), M(\tau)) 
	&\mbox{by (\ref{pur-l})} \\  
	\leq 
\beta + i \dis f + \rho(f(\tilde{L})(\tau), M(\tau)) 
	&\mbox{by the definition of $\dis \tilde{f}$} \\  
	\leq 
\beta + i\dis f + \dis \tilde{f} + \tilde{\rho}\bigl(\tilde{f}(f(\tilde{L}(\tau))), \tilde{f}(M(\tau))\bigr). 
\end{array}$$
Further, let us transform $\tilde{\rho}\bigl(\tilde{f}(f(\tilde{L}(\tau))), \tilde{f}(M(\tau))\bigr)$ as follows:
$$\begin{array}{lr}
\quad \tilde{\rho}\bigl(\tilde{f}(f(\tilde{L}(\tau))), \tilde{f}(M(\tau))\bigr)  
	&\mbox{by triangle equality}  \\ 
	\leq
\tilde{\rho}\bigl(\tilde{f}(f(\tilde{L}(\tau))), \tilde{L}(\tau)\bigr) + \tilde{\rho}\bigl(\tilde{L}(\tau), \tilde{f}(M(\tau))\bigr) 
	&\mbox{by  Item~2 of Lemma~\ref{lem-f-tilf}} \\  
	\leq   
 4\varepsilon + \tilde{\rho}\bigl(\tilde{L}(\tau), \tilde{f}(M(\tau))\bigr) 
	&\mbox{by triangle inequality} \\  
	\leq   
 4\varepsilon + \tilde{\rho}\bigl(\tilde{L}(\tau), \tilde{M}(\tau))\bigr) +  \tilde{\rho}\bigl(\tilde{M}(\tau), \tilde{f}(M(\tau))\bigr)
  &\mbox{by (\ref{eq-au})} \\
    \leq   
 4\varepsilon + \alpha + 2\beta +  \tilde{\rho}\bigl(\tilde{M}(\tau), \tilde{f}(M(\tau))\bigr)
  &\mbox{by (\ref{pur-m})} \\  
  \leq   
 4\varepsilon + \alpha + 2\beta +  \beta + i\dis \tilde{f}.
\end{array}$$
 Then, recall that $\beta = \sqrt{\varepsilon}$ and
  $$i\leq N-1 = \left\lceil\frac{T}{\beta}\right\rceil-1 \leq \frac{T}{\beta}+1 - 1 = \frac{T}{\beta}. $$ 
  Moreover,  both
$\dis f$ and $\dis \tilde{f}$ are not greater than $10\varepsilon$ by Item~1 of Lemma~\ref{lem-f-tilf}; hence,
$$ i (\dis\tilde{f}+\dis f)\leq \frac{T}{\beta} 20 \varepsilon  = 20T\sqrt{\varepsilon}.$$ 
Thus,  
	\begin{eqnarray*}
\rho(L(\tau), M(\tau)) 	  &\leq& 
 \alpha + 4\beta + 4\varepsilon + i (\dis\tilde{f}+\dis f) + \dis\tilde{f}
	  \\ &\leq&
\alpha + (20T +8)\sqrt{\varepsilon}.
\end{eqnarray*} 

\bibliographystyle{spmpsci}      
\bibliography{bibfile}{}

\end{document}